\theoremstyle{plain}
\newtheorem{theorem}{Theorem}[section]
\newtheorem{lemma}[theorem]{Lemma}
\newtheorem{proposition}[theorem]{Proposition}
\newtheorem{remark}[theorem]{Remark}
\title[Skew-product decomposition of Brownian motion on ellipsoid]{Skew-product decomposition of Brownian motion on ellipsoid}
\author[I.\ Valenti\'c]{Ivana\ Valenti\'c}
\address[Ivana\  Valenti\'c]{
	Department of Mathematics\\University of Zagreb\\ Croatia and Department of Statistics\\ University of Warwick\\ UK}
\email{ivana.valentic@math.hr}
\keywords{skew-product decomposition; Brownian motion on a manifold; Wright-Fisher diffusion}
\begin{document}
	\allowdisplaybreaks[4]
	
	\begin{abstract}
		In this article we obtain a skew-product decomposition of a Brownian motion on an ellipsoid of dimension $n$ in a Euclidean space of dimension $n+1$. We only consider such ellipsoid whose restriction to first $n$ dimensions is a sphere and its last coordinate depends on a variable parameter. We prove that the projection of this Brownian motion on to the last coordinate is, after a suitable transformation, a Wright-Fisher diffusion process with atypical selection coefficient.  
	\end{abstract}
	
	\maketitle

\section{Introduction}

Brownian motion (BM) is a process of fundamental importance in natural sciences and in applications one often considers BM on curved surfaces and other manifolds (see \cite{Faraudo-2002}). In this paper we are interested in a BM on hyperellipsoid, that is, an ellipsoid of dimension $n\geq 2$ in a Euclidean space of dimension $n+1$, given by equation

\begin{equation*}
	x_1^2+x_2^2+\ldots+x_n^2+\frac{y^2}{c^2}=1,
\end{equation*}
and denoted $\mathbb{E}^{n}(c)$ where $c>0$ is a fixed constant. In \cite{Mijatovic-Mramor-Uribe-2018} authors investigate the process obtained by projecting BM on a sphere to the ball of lower dimensions. They give a complete
characterization of such processes in terms of SDEs they satisfy. We use a similar approach to obtain a skew product decomposition of a BM on $\mathbb{E}^{n}(c)$. In the case of a sphere this was done in \cite{Mijatovic-Mramor-Uribe-2020}. There authors also observe that a linear transformation of the process obtained in such decomposition is a Wright-Fisher diffusion process with fixed mutation coefficients. The aim of this paper is to investigate how changing the geometry will effect the change in the complexity of a Wright-Fisher diffusion obtained by similar decomposition. We have decided to concentrate on changing the geometric properties only on the last coordinate because the Wright-Fisher diffusion arises as a projection onto one coordinate and therefore to study its properties it makes most sense to keep the geometry in the remaining dimensions as simple as possible in order to not complicate the calculations any further.

The first step is a construction of BM on $\mathbb{E}^{n}(c)$. According to Theorem 3.1.4. in \cite{Hsu-2002} BM $(Z_t)_{t\ge 0}$, $Z=(X^1,X^2,\ldots, X^n, Y)^\top$, on ellipsoid $\mathbb{E}^{n}(c)$ is given (in Itô form) by following SDE on $\mathbb{R}^{n+1}$, which possesses the unique strong solution,
\begin{equation}\label{SDE}
	dZ_t=\sigma(Z_t)dB_t+b(Z_t)dt, \hspace{1cm} Z_0 \in \mathbb{E}^{n}(c)
\end{equation}
where $(B_t)_{t\ge 0}$, $B=(B^1,B^2,\ldots,B^{n+1})^\top$, is a BM on $\mathbb{R}^{n+1}$, diffusion coefficient is given by
\begin{equation*}
	\sigma(z)= I-\frac{1}{r^2}\begin{pmatrix}
		c^4xx^\top & c^2yx\\
		c^2yx^\top & y^2\\
	\end{pmatrix}
\end{equation*}
and drift coefficient is given by
\begin{equation*}\label{b}
	b(z)= \begin{pmatrix}
		b(x)\\
		b(y)\\
	\end{pmatrix}= -\frac{1}{2}
	\begin{pmatrix}
		x\frac{c^4((n-1)r^2+c^2)}{r^4}\\
		y\frac{c^2((n-1)r^2+c^2)}{r^4}\\
	\end{pmatrix},
\end{equation*}
where $r:=\sqrt{|x|^2c^4+y^2}=\sqrt{y^2+c^4-y^2c^2}=\sqrt{(c^4-c^2)|x|^2+c^2}$ and $z=(x^\top,y)^\top=(x^1,x^2,\ldots,x^n,y)^\top$.
\begin{remark}
	In a special case when $c=1$, that is when the ellipsoid is a sphere, we have
	\begin{equation*}
		\sigma(z)= I-zz^\top \,\,\, \text{ and } \,\,\, b(z)=-\frac{n}{2}z^\top,
	\end{equation*}
	which is the Stroock representation of spherical BM.
\end{remark}
We introduce the following time change: $S_t:= \int_0^t \frac{c^2}{c^2-Y_s^2}ds$. In Lemma \ref{lema} we prove that it is well defined. Clearly it is continuous, strictly increasing, and $\lim_{t \to \infty} S_t=\infty$. We denote its inverse by $T:\left[0,\infty\right> \to \left[0, \infty\right>$. As the main result of this paper we prove the following
\begin{proposition}[Skew product decomposition of BM on $\mathbb{E}^{n}(c)$]\label{SPD}
	Let $(Z_t)_{t\ge 0}=((X_t^\top,Y_t)^\top)_{t\ge 0}$ be a solution of
	SDE \eqref{SDE} such that $X_0\neq 0$. Then 
	\begin{itemize}
		\item[(i)] $(\hat{V}_t)_{t\ge 0}$ given by $\hat{V}_t=\frac{X_{T_t}}{|X_{T_t}|}$ is a BM on the sphere $\mathbb{S}^{n-1}$ independent of $(Y_t)_{t\ge 0}$,
		\item[(ii)] there exists an invertible function $f:[-c,c]\to[0,1]$ such that process $(F_t)_{t \ge 0}$, given by $F_t=f(Y_t)$, is a  Wright-Fisher diffusion process with atypical selection coefficient, that is it satisfies the following SDE
		\begin{equation*}
			dF_t=h(c)\sqrt{F_t\left(1-F_t) \right)}d\Tilde{B}_t+\gamma(F_t)dt,
		\end{equation*}
		where $(\Tilde{B}_t)_{t\ge 0}$ is a scalar BM independent of $(\hat{V}_t)_{t\ge 0}$, $h:\left<0,\infty\right> \to \left<0,\infty\right>$ is a  deterministic function and drift coefficient $\gamma:[0,1]\to \mathbb{R}$ is given by
		\begin{equation}\label{gama}
			\gamma(\xi)=\left(\frac{h^2(c)}{4}(1-\xi)-\frac{h^2(c)}{4}\xi\right)-\frac{ch(c)(n-1)}{2} r(\xi)\xi(1-\xi),
		\end{equation}
		where $r:\left<0,1\right>\to \mathbb{R}$ is an analytic function.
	\end{itemize}
	Hence we obtain the skew-product decomposition $X_t=\sqrt{1-\frac{|Y_{S_t}|^2}{c^2}}\hat{V}_{S_t}$, where $Y_t=f^{-1}(F_t)$.
\end{proposition}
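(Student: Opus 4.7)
The plan is to apply It\^o's formula to three functions of $Z_t$---the scalar coordinate $Y_t$, the angular unit vector $V_t := X_t/|X_t|$, and a carefully chosen transformation $F_t := f(Y_t)$---and then to decouple the radial and angular dynamics using the ellipsoid constraint $|X_t|^2+Y_t^2/c^2=1$. This constraint, together with $r(y)^2 = c^4-(c^2-1)y^2$, lets one treat every quantity involving $r$ as a function of $y$ alone; the time change $S_t$ is engineered precisely so that $V_t$ run on the inverse clock $T_t$ becomes a standard BM on $\mathbb{S}^{n-1}$.

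First I would read off the scalar SDE for $Y_t$ from the last row of $\sigma$ and $b$, which, after a short calculation, gives
\[
d\langle Y\rangle_t \;=\; \frac{c^2(c^2-Y_t^2)}{r(Y_t)^2}\,dt;
\]
the vanishing of this diffusion at the poles $Y=\pm c$ is what makes $S_t$ well-defined (Lemma~\ref{lema}). Applying It\^o to $V_t$ and noting that the tangential projection $I-V_tV_t^\top$ annihilates $X_t$, the martingale part of $dV_t$ collapses to $\frac{1}{|X_t|}(I-V_tV_t^\top)\,dB^{(x)}_t$; in particular its cross-variation with the scalar martingale $M^Y_t$ driving $Y$ vanishes (the $dB^{(y)}$-contribution is killed outright, and the $dB^{(x)}$-overlap vanishes because $(I-V_tV_t^\top)X_t = 0$). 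A direct computation then yields $d\langle V,V\rangle_t = \tfrac{c^2}{c^2-Y_t^2}(I-V_tV_t^\top)\,dt$, so the clock $T_t$ is exactly what is required to normalise $d\langle \hat V,\hat V\rangle_t$ to $(I-\hat V_t\hat V_t^\top)\,dt$; the drift simplifies to $-\tfrac{n-1}{2}\hat V_t\,dt$ after routine algebra, matching Stroock's representation of spherical BM (cf.\ the Remark with $c=1$). Independence of $\hat V$ and $Y$ then follows from the vanishing cross-variation via Knight's theorem.

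For part (ii) I would seek a bijective $f:[-c,c]\to[0,1]$ satisfying the matching condition
\[
f'(y)^2\,\frac{c^2(c^2-y^2)}{r(y)^2} \;=\; h(c)^2\,f(y)\bigl(1-f(y)\bigr).
\]
The ansatz $f(y) = \tfrac12\bigl(1-\cos\phi(y)\bigr)$ converts this into the separable ODE $\phi'(y) = h(c)\,r(y)\big/\bigl(c\sqrt{c^2-y^2}\bigr)$, and imposing $\phi(-c)=0$ and $\phi(c)=\pi$ fixes $h(c) = \pi c\big/\int_{-c}^c r(s)/\sqrt{c^2-s^2}\,ds$. Applying It\^o to $F_t=f(Y_t)$ then automatically produces the WF diffusion coefficient $h(c)\sqrt{F_t(1-F_t)}$; the driving BM $\tilde B$ arises from $M^Y$ via Dambis--Dubins--Schwarz and is independent of $\hat V$ by the first part. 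The drift splits into the mutation piece $\tfrac12 f''(y)\,d\langle Y\rangle_t = \tfrac{h^2(c)}{4}(1-2F_t)\,dt$ and the selection piece $f'(y)\,b(y)\,dt$, the latter reducing after the substitution $y = f^{-1}(\xi)$ to $-\tfrac{ch(c)(n-1)}{2}\,r(\xi)\,\xi(1-\xi)\,dt$, where by abuse of notation $r(\xi) := r(f^{-1}(\xi))$.

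The main obstacle lies precisely in this last identification: one has to verify that the residual $y$-dependence of $f'(y)b(y)$, pulled back through $f^{-1}$, factors cleanly as $r(\xi)\,\xi(1-\xi)$ with exactly the constants claimed in \eqref{gama}, and that the resulting $r(\xi)$ is analytic on $(0,1)$ (which will follow from analyticity of both $r(y)$ and $f^{-1}$). The ansatz $f(y)=\tfrac12(1-\cos\phi(y))$ is essentially forced by the requirement that $(f')^2/\bigl(f(1-f)\bigr)$ be the product of $h(c)^2$ and a rational function of $y$; once this is in place the remaining computations are routine, if somewhat lengthy.
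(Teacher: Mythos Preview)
Your overall strategy matches the paper's: derive the autonomous SDE for $Y_t$, transform it to Wright--Fisher form via a function $f$ solving the matching ODE, compute the SDE for $V_t=X_t/|X_t|$, time-change to obtain spherical Brownian motion, and argue independence. Your ansatz $f(y)=\tfrac12(1-\cos\phi(y))$ is equivalent to the paper's explicit solution $f(\xi)=\cos^2\bigl(\tfrac{\pi}{4}-\tfrac{c}{2}h(c)E(\arcsin(\xi/c)\mid 1-1/c^2)\bigr)$ via the incomplete elliptic integral of the second kind, and your normalisation for $h(c)$ agrees with the paper's $h(c)=\pi/(2cE(1-1/c^2))$ after the substitution $s=c\sin\theta$.

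There is, however, a genuine gap in your independence argument. You correctly observe that the martingale part of $dV_t$ is $\tfrac{1}{|X_t|}(I-V_tV_t^\top)\,dB^{(x)}_t$ and that its cross-variation with the $Y$-martingale vanishes. But Knight's theorem requires a family of \emph{pairwise orthogonal} scalar local martingales, and the components of $(I-V_tV_t^\top)\,dB^{(x)}_t$ have covariation $(I-V_tV_t^\top)_{ij}\,dt$, which is not diagonal. Moreover, if you instead try to work with the full $n$-dimensional Brownian motion $W_t=\int_0^{T_t}\tfrac{c}{\sqrt{c^2-Y_s^2}}\,d\tilde B'_s$ driving $\hat V$, a direct computation gives $A'_tA_t^\top=-\operatorname{sgn}(Y_t)V_t\neq 0$, so $\langle W^i,\mu\rangle\neq 0$ and Knight fails there too. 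The paper resolves this by enlarging the probability space with an independent scalar Brownian motion $(\eta_t)_{t\ge0}$ and defining
\[
\hat W_t=\int_0^{T_t}\frac{c}{\sqrt{c^2-Y_s^2}}\Bigl[(I-V_sV_s^\top)\,d\tilde B'_s+V_s\,d\eta_s\Bigr],
\]
which is now a genuine $n$-dimensional Brownian motion (the $V_s\,d\eta_s$ term fills in the rank-deficient direction) and still satisfies $\langle\hat W^i,\mu\rangle_t=0$ because $(I-VV^\top)A'A^\top=0$ and $\eta\perp B$. Knight's theorem then applies to $(\hat W^1,\ldots,\hat W^n,\mu)$, yielding independence of $\hat W$ and $\tilde B=\mu_{S_\cdot}$, hence of $\hat V$ and $Y$. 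This enlargement trick is the missing ingredient in your sketch.

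A smaller point: your decomposition of the drift of $F_t$ into a ``mutation piece'' $\tfrac12 f''\,d\langle Y\rangle$ and a ``selection piece'' $f'\,b(y)$ is not clean, because $f''$ contains a $\phi''$-term that does not contribute to $\tfrac{h^2}{4}(1-2F)$; and your identification $r(\xi):=r(f^{-1}(\xi))$ conflicts with the paper's $r(\xi)$, which is a different analytic function involving $f^{-1}(\xi)$ in both numerator and denominator. You flag this as the ``main obstacle'', and indeed the honest computation (which the paper carries out) shows the residual factors as claimed but not in the way you suggest.
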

We proceed as follows. In the second section we determine the SDE for process $(Y_t)_{t \ge 0}$, we show that it visits neither $c$ nor $-c$ and prove the (ii) part of Proposition \ref{SPD}. In the third section we determine the SDE for process $(X_t)_{t \ge 0}$ and use it to prove part (i) of Proposition \ref{SPD}.

\section{SDE for the last coordinate}
From \eqref{SDE}, using the notation $R_t:=\sqrt{|X_t|^2c^4+Y_t^2}=\sqrt{Y_t^2+c^4-Y_t^2c^2}$, we get that $(Y_t)_{t \ge 0}$ satisfies the following
\begin{equation*}
	dY_t=\sqrt{1-\frac{Y_t^2}{R_t^2}} A_t dB_t - \frac{Y_t}{2}\frac{c^2((n-1)R_t^2+c^2)}{R_t^4}dt,
	\hspace{0.2cm}\text{where}\hspace{0.2cm}
	A_t:=\begin{pmatrix}
		\frac{-c^2Y_t}{R_t^2\sqrt{1-\frac{Y_t^2}{R_t^2}}}X_t^\top, &  \sqrt{1-\frac{Y_t^2}{R_t^2}} \\
	\end{pmatrix}.
\end{equation*}
Let $(\Tilde{B}_t)_{t \ge 0}$ be defined by $\Tilde{B}_t=\int_0^t A_s dB_s$. Since $A_t A_t^\top =1$ implies that $\langle \Tilde{B} \rangle_t=\int_0^t A_s A_s^\top ds= t$
by Levy’s characterization we conclude that $(\Tilde{B}_t)_{t \ge 0}$ is a standard scalar BM. We conclude that $(Y_t)_{t\ge 0}$ satisfies the following SDE
\begin{equation}\label{final3SDE}
	dY_t=\sqrt{1-\frac{Y_t^2}{Y_t^2+c^4-Y_t^2c^2}} d\Tilde{B}_t- \frac{Y_t}{2}\frac{c^2((n-1)\left( Y_t^2+c^4-Y_t^2c^2\right)+c^2)}{\left( Y_t^2+c^4-Y_t^2c^2\right)^2}dt.
\end{equation}
\begin{lemma}\label{lema}
	If $Y_0\neq -c,c$ then process $(Y_t)_{t \ge 0}$ never hits $-c$ and $c$.
\end{lemma}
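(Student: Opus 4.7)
\medskip
\noindent\textbf{Proof plan.}
My approach will be the standard Feller boundary test via the scale function. First, I would rewrite the diffusion coefficient in a cleaner form. Setting $R^2(y):=y^2+c^4-y^2c^2=c^4+y^2(1-c^2)$, note that
\[
\sigma^2(y):=1-\frac{y^2}{R^2(y)}=\frac{R^2(y)-y^2}{R^2(y)}=\frac{c^2(c^2-y^2)}{R^2(y)},
\]
which is strictly positive on $(-c,c)$ and vanishes to first order at $y=\pm c$ (since $R(\pm c)=c$). Because the coefficients of \eqref{final3SDE} are locally Lipschitz on $(-c,c)$, the solution $Y$ is defined up to the first exit time $\tau$ from $(-c,c)$; the task is to show $\tau=\infty$ almost surely.

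Next I would introduce the scale function $s$ of the diffusion, given via
\[
s'(z)=\exp\!\left(-\int_{y_0}^{z}\frac{2b(u)}{\sigma^2(u)}\,du\right),
\]
with drift $b(y)=-\tfrac{y}{2}\cdot\tfrac{c^2((n-1)R^2(y)+c^2)}{R^4(y)}$. Using the simplification above,
\[
\frac{2b(y)}{\sigma^2(y)}=\frac{-y\bigl((n-1)R^2(y)+c^2\bigr)}{R^2(y)(c^2-y^2)}.
\]
The key computation is the boundary asymptotic: as $y\to c$ one has $R^2(y)\to c^2$, $(n-1)R^2(y)+c^2\to nc^2$, and $c^2-y^2=(c-y)(c+y)\sim 2c(c-y)$, so $\tfrac{2b(y)}{\sigma^2(y)}\sim -\tfrac{n}{2(c-y)}$. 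A symmetric computation at $y=-c$ gives $\tfrac{2b(y)}{\sigma^2(y)}\sim \tfrac{n}{2(c+y)}$. Integrating these singularities yields
\[
s'(z)\sim C_{+}(c-z)^{-n/2}\quad\text{as }z\to c,\qquad s'(z)\sim C_{-}(c+z)^{-n/2}\quad\text{as }z\to -c,
\]
for positive constants $C_{\pm}$. Since $n\ge 2$ gives $n/2\ge 1$, both integrals $\int^{c}s'(z)\,dz$ and $\int_{-c}s'(z)\,dz$ diverge, so $s(c-)=+\infty$ and $s((-c)+)=-\infty$.

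Finally I would conclude by the standard argument: because $s$ kills the drift, $s(Y_{t\wedge\tau_{a,b}})$ is a bounded continuous local, hence true, martingale for any interval $[a,b]\subset(-c,c)$ containing $Y_0$. Optional stopping gives
\[
s(Y_0)=s(a)\,\mathbb{P}(\tau_a<\tau_b)+s(b)\,\mathbb{P}(\tau_b<\tau_a).
\]
Letting $b\uparrow c$, the blow-up $s(b)\to+\infty$ forces $\mathbb{P}(\tau_b<\tau_a)\to 0$, so $\mathbb{P}(\tau_c<\tau_a)=0$; letting then $a\downarrow -c$ and using $s((-c)+)=-\infty$ analogously shows $\mathbb{P}(\tau_{-c}<\infty)=0$. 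Hence $Y$ never reaches $\pm c$.

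The only delicate step is the asymptotic estimate of $2b/\sigma^2$ near the boundary; once that is written down the $n\ge 2$ divergence is immediate and the martingale argument is routine. I do not expect the closed-form antiderivative of $2b/\sigma^2$ to be needed, which avoids wrestling with the rational function in $y$ coming from $R^2(y)$.
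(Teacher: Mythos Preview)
Your proposal is correct and follows essentially the same route as the paper: both arguments reduce to showing that the scale density $s'(\xi)=\exp\bigl(-\int^\xi 2b/\sigma^2\bigr)$ is not integrable at $\pm c$, using the same expression $2b/\sigma^2=-y\bigl((n-1)R^2+c^2\bigr)/\bigl(R^2(c^2-y^2)\bigr)$. The only cosmetic differences are that the paper computes the antiderivative in closed form, obtaining $s(\xi)=\tfrac{1}{c}\sqrt{c^4-c^2\xi^2+\xi^2}\,(c^2-\xi^2)^{-n/2}$, and then invokes the Feller/Karlin--Taylor boundary criterion directly, whereas you extract the $(c\mp\xi)^{-n/2}$ blow-up by a boundary asymptotic and spell out the optional-stopping argument yourself; both lead to the same $n\ge 2$ divergence.
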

\begin{proof}
	According to Lemma 6.1(ii) in \cite{Karlin-Taylor-1981} to prove that $(Y_t)_{t \ge 0}$ never hits $c$ it is enough to show that $\int_0^cs(\xi)d\xi=\infty$, where
	\begin{equation*}
		-\ln s(\xi)= \int^\xi \frac{-y((n-1)(y^2+c^4-y^2c^2)+c^2)}{(y^2+c^4-y^2c^2)(c^2-y^2)}dy.
	\end{equation*}
	Since $s(\xi)=\frac{1}{c}\sqrt{c^4-c^2\xi^2+\xi^2}(c^2-\xi^2)^{-\frac{n}{2}}$ and $\sqrt{c^4-c^2\xi^2+\xi^2}\geq \min \left\{ c,c^2\right\}$ for $\xi \in [-c,c]$, we have
	$\int_0^cs(\xi)d\xi \geq \frac{1}{c}\min \left\{ c,c^2\right\} \int_0^c(c^2-\xi^2)^{-\frac{n}{2}}d\xi=\infty$
	for $\frac{n}{2}\geq 1$ which completes the proof. 
\end{proof}
\begin{remark}
	In the special case when $c=1$, that is when the ellipsoid is a sphere, we have
	\begin{equation*}
		dY_t=\sqrt{1-Y_t^2} d\Tilde{B}_t- \frac{n}{2}Y_tdt.
	\end{equation*}
	In \cite{Mijatovic-Mramor-Uribe-2020} it was first observed that $(Y_t)_{t \ge 0}$ is a linear transformation of a Wright-Fisher diffusion. In our case this transformation will be a more complicated function. It still make sense to look for such transformation because from \eqref{final3SDE} we see that $(Y_t)_{t \ge 0}$ is, analogously to the Wright-Fisher diffusion, a diffusion process on a compact state space with a positive diffusion coefficient which dies out on the boundary.
\end{remark}
We wish to find a function $f:[-c,c]\to[0,1]$ such that $(f(Y_t))_{t \ge 0}$ is a Wright-Fisher diffusion that is it satisfies the following SDE
\begin{equation*}
	df(Y_t)=h(c)\sqrt{f(Y_t)\left(1-f(Y_t) \right)}d\Tilde{B}_t+\gamma(f(Y_t))dt.
\end{equation*}
By Ito's formula we need function $f$ to satisfy the following nonlinear ODE
\begin{equation}\label{PDE}
	f'(\xi)\sqrt{\frac{c^4-\xi^2c^2}{\xi^2+c^4-\xi^2c^2}}=h(c)\sqrt{f(\xi)\left(1-f(\xi) \right)}.
\end{equation}

To solve \eqref{PDE}, consider the incomplete elliptic integral of the second kind, given by $E(\phi \mid m):= \int_0^{\phi} \sqrt{1-m\sin^2(\xi)} d\xi$. We will denote by $E\left(1-\frac{1}{c^2}\right)=E\left(\frac{\pi}{2} \mid 1-\frac{1}{c^2}\right)$ the complete elliptic integral of the second kind. Its name comes from the fact that for an ellipse with semi-major axis $1$ and semi-minor axis $\frac{1}{c}$ the circumference of the ellipse is $4$ times the complete elliptic integral of the second kind $E(1-\frac{1}{c^2})$. Clearly, $E(\phi \mid m)$ is strictly increasing, infinitely differentiable function of $\phi$ and it follows from \cite{Vel-1969} that it is analytical. For more information on elliptic integrals see Chapter $19$ in \cite{Olver-Lozier-Boisvert-Clark-2010}.

For $h$ such that $h(c):=\frac{\pi}{2cE\left(1-\frac{1}{c^2}\right)}$ the solution $f$ of the equation \eqref{PDE} is the well defined invertible function given by
\begin{equation*}
	f(\xi)= \cos^2(\frac{\pi}{4}-\frac{c}{2}h(c)E\left(\arcsin(\frac{\xi}{c}) \mid 1-\frac{1}{c^2}\right)).
\end{equation*}
Using Ito's formula we conclude that $\gamma$ satisfies \eqref{gama} where 
\begin{equation*}
	r(\xi)=\frac{f^{-1}(\xi)}{ \sqrt{(f^{-1}(\xi)^2+c^4-f^{-1}(\xi)^2c^2)(c^2-f^{-1}(\xi)^2)\xi(1-\xi)}}
\end{equation*} 
\begin{figure}[h!]
	\centering
	\includegraphics[width=0.3\textwidth]{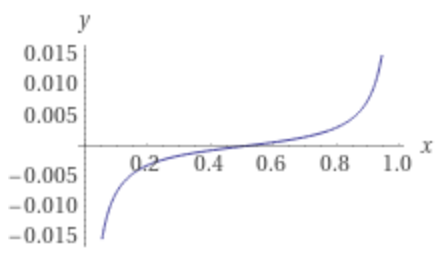} \hspace{2cm}
	\includegraphics[width=0.3\textwidth]{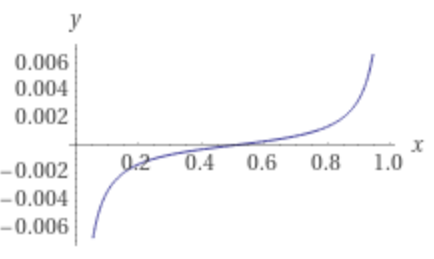}
	\caption{Graph of function approximating $\xi \mapsto r(\xi)$ for $c=20$ and $c=30$}
\end{figure}

Notice that $r$ is a product and composition of functions which are analytic on $\left<0,1\right>$ and therefore $r:\left<0,1\right>\to \mathbb{R}$ is analytic function. For the boundary points we have $\lim \limits_{\xi \to 0} r(\xi) = -\infty$ and $\lim \limits_{\xi \to 1} r(\xi) = \infty$.

Let us specifically determine the behaviour of the process $(F_t)_{t \ge 0}$ at the boundary, that is for $Y_t \in \left\{-c,c \right\}$. The diffusion coefficient vanishes and for the drift we get
\begin{equation*}
	\lim \limits_{\xi \to c} \gamma(f(\xi))=-\frac{nh^2(c)}{4} \hspace{0.5cm}\text{and}\hspace{0.5cm}
	\lim \limits_{\xi \to -c} \gamma(f(\xi))=\frac{nh^2(c)}{4}.
\end{equation*}

\begin{figure}[h!]\label{grafh}
	\centering
	\includegraphics[width=0.3\textwidth]{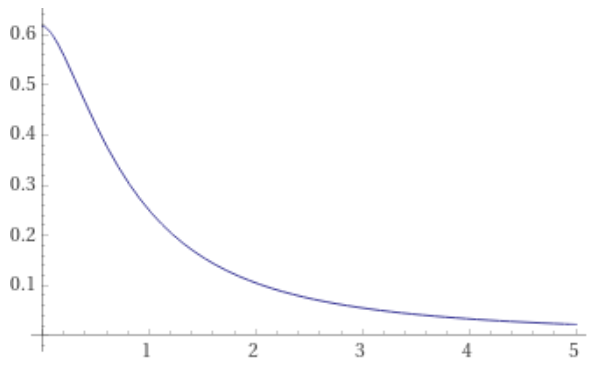}
	\caption{Graph of function $c \mapsto \frac{h^2(c)}{4}$}
\end{figure}
As seen in the Figure \ref{grafh} $\lim \limits_{c \to \infty} \frac{h^2(c)}{4}=0$ and $\lim \limits_{c \to 0} \frac{h^2(c)}{4}=\frac{\pi^2}{16}\approx 0.62$.
\begin{remark}
	In the case of a sphere for equation \eqref{PDE} we get $\sqrt{1-\xi^2}f'(\xi)=\sqrt{f(\xi)\left(1-f(\xi) \right)}$ for which the solution is a linear function $f(\xi)=\frac{\xi\pm 1}{2}$. Therefore $(F_t)_{t \ge 0}$ is a Wright-Fisher diffusion that satisfies the following SDE
	\begin{equation*}
		dF_t=\sqrt{F_t\left(1-F_t \right)} d\Tilde{B}_t+n \frac{1-2F_t}{4}dt.
	\end{equation*}
\end{remark}

We see that a small change in the geometry of the state space of BM results in a rather large change in the level of complexity of the projection. The transformation needed to get the Wright-Fisher diffusion is in this general case more complicated than a linear function, but the Wright-Fisher diffusion process we obtain is more general.

\section{SDE for the first $n$ coordinates and independence}

Now let us focus on determining the SDE for process $\left(X_t\right)_{t \ge 0}$. From \eqref{SDE} we get that $\left(X_t\right)_{t \ge 0}$ satisfies the following
\begin{equation*}
	dX_t= \left(I-\frac{1-\frac{|Y_t|}{R_t}}{|X_t|^2} X_t X_t^\top\right)A'_t dB_t + b(X_t) dt,
	\hspace{0.2cm}\text{where}\hspace{0.2cm}
	A'_t:= \begin{pmatrix}
		I-\frac{1-\frac{|Y_t|}{R_t}}{|X_t|^2} X_t X_t^\top, & -\frac{c^2}{R_t|Y_t|} X_t Y_t \\
	\end{pmatrix}.
\end{equation*}

As before let $(\Tilde{B}'_t)_{t \ge 0}$ be  defined by $\Tilde{B}'_t=\int_0^t A'_s dB_s$. Since $A'_t A'_t{}^\top =I$ implies that $\langle \Tilde{B}'_i,\Tilde{B}'_j\rangle _t=\int_0^t (A'_sA'_s{}^\top)_{ij} ds  = \delta_{ij}t$ for all $i,j \in \{1,2,\ldots, n \}$ by Levy’s characterization we conclude that $(\Tilde{B}'_t)_{t \ge 0}$ is a standard BM on $\mathbb{R}^n$. From this, using $R_t=\sqrt{|X_t|^2c^4+Y_t^2}= \sqrt{(c^4-c^2)|X_t|^2+c^2}$, we get
\begin{equation*}
	dX_t= \left(I-\frac{1-\sqrt{\frac{1-|X_t|^2}{(c^2-1)|X_t|^2+1}}}{|X_t|^2} X_t X_t^\top \right)d\Tilde{B}'_t -\frac{X_t}{2}\frac{(n-1)(c^4-c^2)|X_t|^2+nc^2}{\left( (c^2-1)|X_t|^2+1\right)^2}dt.
\end{equation*}
\begin{remark}
	Notice that in special case when when the ellipsoid is a sphere we have
	\begin{equation*}
		dX_t= \left(I-\frac{1-\sqrt{1-|X_t|^2}}{|X_t|^2} X_t X_t^\top \right)d\Tilde{B}'_t  -\frac{n}{2}X_tdt
	\end{equation*}
	which is a result obtained by Proposition 1.1 in \cite{Mijatovic-Mramor-Uribe-2018}
\end{remark}

Define the process $\left(V_t\right)_{t \ge 0}$ by $V_t:=(V^1_t, V^2_t, \ldots, V^n_t)^\top$, where $V^i_t=\frac{X^i_t}{|X_t|}$ for $i \in \{1,2,\ldots,n\}$. Using Ito's formula we conclude that 
\begin{equation*}
	dV_t= \frac{1}{|X_t|}\left(I- V_t V_t^\top\right)d\Tilde{B}'_t  -\frac{n-1}{2}\frac{1}{|X_t|^2} V_tdt.
\end{equation*}
Next, we introduce the process $\left(\hat{V}_t\right)_{t \ge 0}$ by $\hat{V}_t=V_{T_t}$. Since $|X_t|^2= \frac{c^2-Y_t^2}{c^2}$ the change-of-time formulae for the Ito and Lebesgue-Stieltjes integrals from Chapter V, §1 in \cite{Revuz-Yor-1999} imply that $\hat{V}_t=(I-\hat{V}_t\hat{V}_t^\top)dW_t-\frac{n-1}{2}\hat{V}_tdt$, where $W_t=\int_0^{T_t}\frac{c}{\sqrt{c^2-Y_s^2}}d\Tilde{B}'_s$ is a standard BM on $\mathbb{R}^n$ by Levy's characterization. This implies that the process $\left(\hat{V}_t\right)_{t \ge 0}$ is a BM on $\mathbb{S}^{n-1}$. 

Let us prove that process $(\hat{V}_t)_{t \ge 0}$ is independent of process $(Y_t)_{t \ge 0}$. To do this we first need to modify the BM driving the SDE for $(\hat{V}_t)_{t \ge 0}$. Let us enlarge the probability space to accommodate another scalar BM $(\eta_t)_{t \ge 0}$ which is independent of $(B_t)_{t \ge 0}$ and define a continuous local martingale 
\begin{equation*}
	\hat{W}_t= \int_0^{T_t}\frac{c}{\sqrt{c^2-Y_s^2}}(I-V_sV_s^\top )d\Tilde{B}'_s+ \int_0^{T_t}\frac{c}{\sqrt{c^2-Y_s^2}}V_s d\eta_s.
\end{equation*}
Since $\langle \hat{W}^i, \hat{W}^j\rangle_t=\delta_{ij}t$, we conclude that $(\hat{W}_t)_{t \ge 0}$ is $\mathcal{G}_t$-BM, where $\mathcal{G}_t:= \mathcal{F}_{T_t}$ ($(B_t)_{t \ge 0}$ is $\mathcal{F}_t$-BM). Since
\begin{equation*}
	dV_t=\left(I- V_t V_t^\top\right)\frac{c}{\sqrt{c^2-Y_t^2}}\left[\left(I- V_t V_t^\top\right)d\Tilde{B}'_t + V_t d \eta_t\right]-\frac{n-1}{2}\frac{c^2}{c^2-Y_t^2} V_tdt
\end{equation*}
we can use change of time formula for stochastic and Lebesgue-Stieltjes integral from Chapter V, §1 in \cite{Revuz-Yor-1999} and we get $\hat{V}_t= (I-\hat{V}_t\hat{V}_t^\top)d\hat{W}_t-\frac{n-1}{2}\hat{V}_tdt$ so that $\left(\hat{V}_t\right)_{t \ge 0}$ is a BM on $\mathbb{S}^{n-1}$.

To prove independence of $\left(\hat{V}_t\right)_{t \ge 0}$ and $\left(Y_t\right)_{t \ge 0}$ it is enough to prove independence of BMs $\left(\hat{W}_t\right)_{t \ge 0}$ and $\left(\Tilde{B}_t\right)_{t \ge 0}$ which are driving the respective SDEs. We define $\mu_t := \Tilde{B}_{T_t}= \int_0^{T_t}A_s dB_s$ so that $\mu$ is a $\mathcal{G}_t$-local martingale. Since $\langle \mu\rangle _t= T_t$, the inverse of $T$ is $S$, $\langle \hat{W}^i, \hat{W}^j\rangle _t=\delta_{ij}t$ for each $i,j\in \{1,2,\ldots, n\}$ and for each $i\in \{1,2,\ldots, n\}$
\begin{equation*}
	\langle \hat{W}^i, \eta\rangle _t= \int_0^{T_t}\frac{c^2}{c^2-Y_s^2}\sum_{k=1}^n\sum_{j=1}^{n+1}(\delta_{ik}-V^i_sV^k_s )A'^{kj}_sA^j_sds=0,
\end{equation*}
we can use Knight’s Theorem (also known as the multidimensional Dambis-Dubins-Schwarz Theorem) Chapter V, Theorem 1.9 in \cite{Revuz-Yor-1999} to show that $(\hat{W}_t)_{t \ge 0}$ and $(\mu_{S_t})_{t \ge 0}=(\Tilde{B}_t)_{t \ge 0}$ are independent BMs. This completes the proof of Proposition \ref{SPD}.

\section*{Acknowledgements}
We thank Aleksandar Mijatović, Dario Spanò and Jaromir Sant for useful discussions. Financial support through the \textit{Croatian Science Foundation} under project 8958 and \textit{London Mathematical Society} under Grace Chisholm Young Fellowship are gratefully acknowledged. We also thank the anonymous referee for the helpful comments.
	
\bibliographystyle{alpha}
\bibliography{references.bib}
	
\end{document}